\newcommand{\U}{{\mathcal U}}
\newcommand{\0}{{\mathbf 0}}
\newcommand{\C}{{\mathbb C}}
\newcommand{\Z}{{\mathbb Z}}
\newcommand{\cL}{{\mathbb L}}
\newcommand{\D}{{\mathbb D}}
\newcommand{\W}{{\mathcal W}}
\newcommand{\strat}{{\mathfrak S}}
\newcommand{\Proj}{{\mathbb P}}
\newcommand{\hyp}{{\mathbb H}}
\newcommand{\im}{\mathop{\rm im}\nolimits}
\newcommand{\Fdot}{\mathbf F^\bullet}
\newtheorem{defn0}{Definition}[section]
\newtheorem{prop0}[defn0]{Proposition}
\newtheorem{conj0}[defn0]{Conjecture}
\newtheorem{thm0}[defn0]{Theorem}
\newtheorem{lem0}[defn0]{Lemma}
\newtheorem{corollary0}[defn0]{Corollary}
\newtheorem{example0}[defn0]{Example}
\newtheorem{remark0}[defn0]{Remark}
\newtheorem{question0}[defn0]{Question}
\newenvironment{defn}{\begin{defn0}}{\end{defn0}}
\newenvironment{prop}{\begin{prop0}}{\end{prop0}}
\newenvironment{thm}{\begin{thm0}}{\end{thm0}}
\newenvironment{cor}{\begin{corollary0}}{\end{corollary0}}
\newenvironment{exm}{\begin{example0}\rm}{\end{example0}}
\newenvironment{rem}{\begin{remark0}\rm}{\end{remark0}}
\newcommand{\defref}[1]{Definition~\ref{#1}}
\newcommand{\propref}[1]{Proposition~\ref{#1}}
\newcommand{\thmref}[1]{Theorem~\ref{#1}}
\newcommand{\exref}[1]{Example~\ref{#1}}
\newcommand{\remref}[1]{Remark~\ref{#1}}
\title[Affine IPA-Deformations]{IPA-deformations of functions on affine space}
\author{David B. Massey}
\address{Department of Mathematics\\
  Northeastern University\\
  Boston, Massachusetts 02115}
\email[D.~Massey]{d.massey@neu.edu}
\subjclass[2010]{32S05, 32S60, 32S30, 32S55, 32S50, 32S15}
\begin{document}

\begin{abstract}
We investigate deformations of functions on affine space, deformations in which the changes specialize to a distinguished point in the zero-locus of the original function. Such deformations enable us to obtain nice results on the cohomology of the Milnor fiber of the original function.
\end{abstract}

\maketitle

\thispagestyle{fancy}

\lhead{}
\chead{}
\rhead{ }

\lfoot{}
\cfoot{}
\rfoot{}

\section{Introduction} 

It is standard to investigate singularities of maps and spaces by looking at deformations. In our 20-year-old paper \cite{prepolardef}, we defined {\it prepolar deformations}, but then did very little with the notion.

In this paper, we will define a notion -- {\it deformations with isolated polar activity or IPA-deformations} -- which is a more general notion than that of our old prepolar deformations. An IPA-deformation is roughly analogous to an unfolding of a map-germ such that the map-germ has an isolated instability with respect to the given unfolding.  

We will use many of our results from \cite{enrichpolar}, in which we used the derived category, nearby cycles, vanishing cycles, and a generalized notion of the relative polar curve. We did this with respect to an arbitrary bounded, constructible complex of $\Z$-modules on an arbitrary complex analytic space (and the results hold for more general base rings which are commutative, regular, Noetherian, with finite Krull dimension). However, the extreme generality of \cite{enrichpolar} makes the results there almost incomprehensible.

Even in the well-studied case of functions on affine space, the results we obtain are  non-trivial, but hard to decipher from \cite{enrichpolar}. Some of the results that we obtain are familiar, but with weaker hypotheses, as in \thmref{thm:vangen}, while the main result for IPA-deformations, \thmref{thm:ipa}, is new, and unavoidably involves hypercohomology with coefficients in the sheaf of vanishing cycles.

The basic essence of our results will certainly not be surprising for experts; the results say that if one has a complex analytic function germ at the origin in affine space and one deforms the function in such a way that the origin is the ``only place where the deformation changes the function'', then the Milnor fiber of the original function at the origin has cohomology which ``changes only in the top possible degree'' (that is, in middle dimension). Of course, making this precise is not simple.

\medskip

In Section 2, we give the ``correct'' definition of the relative polar curve; this is a variant of  the classical relative polar curve of Hamm, L\^e, and Teissier (see, for instance,  \cite{hammlezariski}, \cite{teissiercargese}, \cite{leattach}, \cite{letopuse}). This definition allows us to easily specify the genericity that we need for our deformation parameter in order to obtain our results. We prove a number of properties of this generalized relative polar curve.

In Section 3, we recover three classical results on Milnor fibers and complex links, but we use weaker hypotheses; on the other hand, our results are weaker, as they are on the level of cohomology, not homotopy-type.

In Section 4, we define an IPA-deformation $f$ of a function $f_0$  and give the main theorem, \thmref{thm:ipa}. This theorem gives the reduced cohomology of the Milnor fiber $F_{f_0, \0}$ in terms of a single intersection number and the hypercohomology $\hyp^k(F_{t,\0}\cap\Sigma f; \phi_f\Z_\U^\bullet)$, where $F_{t, \0}\cap \Sigma f$ is the Milnor fiber of the deformation parameter $t$ restricted to the critical locus $\Sigma f$ and $\phi_f\Z_\U^\bullet$ is the vanishing cycles of the constant sheaf along $f$. We give two examples, where $f_0$ has a $0$- and $1$-dimensional critical locus, for which we can explicitly calculate $\widetilde H(F_{f_0, \0};\Z)$ via \thmref{thm:ipa}.

\medskip

\section{The Relative Conormal Space and Generalized Polar Curve}\label{sec:rel}

Let $\U$ be a connected open neighborhood of the origin in $\C^{n+1}$ and let $f:(\U,\0)\rightarrow (\C,0)$ be a complex analytic function which is not identically zero. We let $t$ be a non-zero linear form on $\C^{n+1}$, restricted to $\U$. We write $\Sigma f$ (respectively, $\Sigma(f,t)$) for the critical locus of $f$ (respectively, of  $(f,t)$). We assume that $\U$ is chosen small enough so that $\Sigma f\subseteq V(f):=f^{-1}(0)$.
\medskip

Suppose that $M$ is a complex submanifold of $\U$. For $x\in M$, let $T_xM$ denote the tangent space of $M$ at $x$. Let $T^*\U\cong \U\times\C^{n+1}$ denote the cotangent space of $\U$ and let $\pi:T^*\U\rightarrow \U$ be the projection.

 Recall the following now-classic definitions (see, for instance, \cite{teissiervp2}).

\begin{defn}\label{def:main} 

The {\bf conormal space} $T^*_M\U$ is defined by
$$
T^*_M\U:= \{(x, \eta)\in T^*\U\cap \pi^{-1}(M)\ |\ \eta(T_xM)=0\}.
$$

The {\bf relative conormal space} $T^*_{f_{|_M}}\U$ is defined by 
$$
T^*_{f_{|_M}}\U :=\{(x, \eta)\in T^*\U\cap \pi^{-1}(M)\ |\ \eta(T_xM\cap \ker d_x f)=0\}.
$$

\end{defn}

\smallskip

\begin{rem} We have two comments about the definition of the relative conormal space.

\smallskip

\begin{itemize}

\item The fiber, $(T^*_f\U)_x$, of $T^*_f\U$ over a point $x\in \U$ is easy to describe:

$$
(T^*_f\U)_x \ = \ \begin{cases}\{\lambda\,d_xf\}_{\lambda\in\C}, &\textnormal{ if } x\not\in \Sigma f;\\
0, &\textnormal{ if } x\in \Sigma f.
\end{cases}
$$

\medskip

\item Note that there is an equality of the closures in $T^*\U$ given by
$$
\overline{T^*_{f_{|_{\U-\Sigma f}}}\U}  \ =  \ \overline{T^*_f\U},
$$
and this analytic set is irreducible of dimension $n+2$.

\end{itemize}
\end{rem}

\medskip

We shall need the following proposition later.

\begin{prop}\label{prop:contain} Suppose that $Y$ is an irreducible component of $\Sigma f$.  Then, $\overline{T^*_{Y_{\operatorname{reg}}}\U}\subseteq \overline{T^*_f\U}$. In fact, for an open dense (in the analytic Zariski topology) subset $Q\subseteq Y_{\operatorname{reg}}$, there is an equality 
$$T^*_Q\U= \overline{T^*_f\U}\cap \pi^{-1}(Q).$$

\end{prop}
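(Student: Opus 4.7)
The plan is to isolate a sufficiently nice open dense $Q \subseteq Y_{\operatorname{reg}}$, verify the fiber equality $(\overline{T^*_f\U})_x = (T^*_Y\U)_x$ at each $x \in Q$ via local coordinates and norm estimates on the gradient, and then take closures. I would take $Q$ to be the complement in $Y_{\operatorname{reg}}$ of the union of the remaining irreducible components of $\Sigma f$; this is open and dense in $Y_{\operatorname{reg}}$, and at each $x \in Q$ one has $\Sigma f = Y$ in a neighborhood of $x$. Choose local holomorphic coordinates $(v_1,\dots,v_d,w_1,\dots,w_{n+1-d})$ centered at $x$ with $Y = \{w=0\}$, where $d = \dim Y$. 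Because $Y \subseteq \Sigma f \subseteq V(f)$, both $f$ and each first partial $\partial f/\partial u_i$ vanish along $Y$, so $f$ belongs to the square of the ideal $(w_1,\dots,w_{n+1-d})$; this immediately yields the bounds $\partial f/\partial v_i = O(|w|^2)$ and $\partial f/\partial w_j = O(|w|)$.

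The containment $(\overline{T^*_f\U})_x \subseteq (T^*_Y\U)_x$ follows at once: any $\eta = \lim_k \lambda_k d_{y_k}f$ with $y_k = (v_k,w_k) \to x$ and $w_k \ne 0$ must have $|\lambda_k| = O(|w_k|^{-1})$ from boundedness of the $w$-part of $\lambda_k d_{y_k}f$, after which the $v$-part equals $\lambda_k \cdot O(|w_k|^2) = O(|w_k|) \to 0$, so $\eta$ annihilates $T_xY$. For the reverse inclusion, pass to a generic transverse slice $N$ of codimension $d$ at $x$; for generic $N$ the restriction $f|_N$ has an isolated critical point at $x$, so the classical isolated-singularity result gives $(\overline{T^*_{f|_N}N})_x = T^*_xN \cong (T^*_Y\U)_x$. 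Lifting this fact: given $\eta \in T^*_xN$, pick $y_k \in N \setminus \{x\}$ with $\lambda_k d_{y_k}(f|_N) \to \eta$, and perturb $y_k$ slightly in the $v$-direction so as to arrange $\lambda_k \partial_v f(y_k) \to 0$; then $\lambda_k d_{y_k}f \to (0,\eta)$ in $T^*_x\U$, placing $\eta \in (\overline{T^*_f\U})_x$. Equality of fibers on $Q$ is the ``in fact'' clause, and since $T^*_Q\U$ is dense in $T^*_{Y_{\operatorname{reg}}}\U$, taking closures yields the first inclusion $\overline{T^*_{Y_{\operatorname{reg}}}\U} \subseteq \overline{T^*_f\U}$.

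The main obstacle is the reverse-inclusion step when the transverse singularity of $f$ along $Y$ is non-Morse: the Lojasiewicz exponent of $d(f|_N)$ at $x$ may exceed the order of vanishing of $f$ along $Y$, so naively scaling along curves in $N$ need not force the $v$-direction error $\lambda_k \partial_v f(y_k)$ to go to zero. Compensating with a small $v$-perturbation must be done delicately --- e.g.\ via an implicit function theorem solving $\partial_v f = 0$ to leading order --- or one may instead appeal to Thom's $a_f$-condition, which holds on an open dense subset of any Whitney stratum and is precisely the content of the desired fiber containment.
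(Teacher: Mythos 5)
There is a genuine gap, and it occurs in both halves of your argument. First, your choice of $Q$ (the complement in $Y_{\operatorname{reg}}$ of the other components of $\Sigma f$) is not generic enough, and the containment $(\overline{T^*_f\U})_x\subseteq (T^*_Y\U)_x$ can fail at points of that $Q$. Take $f=x^2y$ on $\C^2$, so that $\Sigma f=Y=V(x)$ and your $Q$ is all of $Y$. Along $y_k=x_k\to 0$ with $\lambda_k=x_k^{-2}$ one gets $\lambda_k d_{(x_k,y_k)}f=2\,dx+dy$, a covector at $\0$ that does not annihilate $T_\0 Y$. The flaw in your estimate is that boundedness of the $w$-part of $\lambda_k d_{y_k}f$ gives no control on $|\lambda_k|$: you have only the upper bound $|\partial_w f|=O(|w|)$, whereas $|\lambda_k|=O(|w_k|^{-1})$ would require a lower bound of the same order, and that fails wherever $\partial_w f$ vanishes to higher order (here $\partial_x f=2xy$ near $y=0$). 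This containment is exactly the Thom $a_f$ condition, which is a theorem (Hironaka) valid only after shrinking $Q$ to a stratum of an $a_f$ stratification; that is precisely how the paper obtains this direction, and it cannot be replaced by the order-of-vanishing estimate.

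Second, you have the roles of the $a_f$ condition reversed in your closing paragraph: $a_f$ gives $\overline{T^*_f\U}\cap\pi^{-1}(S)\subseteq T^*_S\U$, i.e.\ the inclusion you claimed ``follows at once,'' not the reverse inclusion $T^*_xN\subseteq(\overline{T^*_f\U})_x$. For that reverse inclusion your lifting step needs $\lambda_k\partial_vf(y_k)\to 0$ while $\lambda_k d_{y_k}(f|_N)$ converges to a nonzero covector, which amounts to an inequality of the form $|\partial_vf|\leq C\,|w|\,|\partial_wf|$ near $x$ --- the strict Thom condition $w_f$ --- again only generically valid and not a consequence of $f\in(w)^2$; the proposed ``small $v$-perturbation'' is not carried out, and there is no reason the perturbed points remain where $df$ is controlled. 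The paper sidesteps this entirely by a sheaf-theoretic argument: since $\Sigma f$ is the support of $\phi_f[-1]\Z^\bullet_\U[n+1]$, one gets $\overline{T^*_{Y_{\operatorname{reg}}}\U}\subseteq SS\big(\phi_f[-1]\Z^\bullet_\U[n+1]\big)$, and the perverse short exact sequence together with $SS\big(\psi_f[-1]\Z^\bullet_\U[n+1]\big)=\overline{T^*_f\U}\cap(f\circ\pi)^{-1}(0)$ yields the first statement. Your slicing strategy could probably be repaired by invoking the generic $w_f$ condition, but as written both inclusions rest on estimates that are false without further shrinking of $Q$.
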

\begin{proof} This is highly non-trivial, and we must use techniques and results from \cite{singenrich}. Recall that we are assuming that $\U$ is chosen small enough so that $\Sigma f\subseteq V(f)$. 
\smallskip

Since $\Z^\bullet_{V(f)}[n]$ is perverse, there is a canonical short exact sequence of perverse sheaves
$$
0\rightarrow \Z^\bullet_{V(f)}[n]\rightarrow \psi_f[-1]\Z^\bullet_\U[n+1]\rightarrow \phi_f[-1]\Z^\bullet_\U[n+1]\rightarrow 0,
$$
which implies that there is a containment of micro-supports 
\begin{equation}
SS\big(\phi_f[-1]\Z^\bullet_\U[n+1]\big)\subseteq SS\big(\psi_f[-1]\Z^\bullet_\U[n+1]\big).\tag{$\dagger$}
\end{equation}
Theorem 3.3 of \cite{singenrich} tells us that
$$
SS\big(\psi_f[-1]\Z^\bullet_\U[n+1]\big)=\overline{T^*_f\U}\cap (f\circ\pi)^{-1}(0).
$$
We also know that the support of $\phi_f[-1]\Z^\bullet_\U[n+1]$ is $\Sigma f$, which implies that, for all irreducible components $Y$ of $\Sigma f$, 
$$\overline{T^*_{Y_{\operatorname{reg}}}\U}\subseteq SS\big(\phi_f[-1]\Z^\bullet_\U[n+1]\big).
$$
The first conclusion now follows from ($\dagger$).

The final statement follows from the former statement, together with the fact that there exists an $a_f$ stratification of $V(f)$ (see  \cite{hironakastratflat}). This implies that there is an open dense subset $S$ of $Y_{\operatorname{reg}}$, a stratum, such that 
$$
\overline{T^*_f\U}\cap\pi^{-1}(S)\subseteq T^*_S\U,
$$
which finishes the proof.
\end{proof}

\medskip

Below, we consider the intersection product, $(- \cdot -)$; as we will always be dealing with proper intersections inside a smooth manifold, this will yield a well-defined intersection cycle ({\bf not} just a cycle class modulo rational equivalence). See \cite{fulton}, Section 8.2 and Example 11.4.4.

We also consider $\im dt$, the image of the differential of $t$. To be clear, this means that
$$
\im dt :=\{(x, d_xt)\in T^*\U \ | \ x\in\U\};
$$
Note that $\im dt$ has dimension $n+1$, so that $\overline{T^*_f\U}$ properly intersects $\im dt$ in $T^*\U$ if and only if 
$$
\dim \left(\overline{T^*_f\U}\,\cap \,\im dt\right) = (n+2)+(n+1) - 2(n+1) = 1.
$$
 Observe that the restriction, $\hat\pi$, of $\pi$ to $\overline{T^*_f\U}\ \cap\ \im dt$ yields an analytic isomorphism to its image $\pi\big(\overline{T^*_f\U}\ \cap\ \im dt\big)$, with inverse $\hat\pi^{-1}:\pi\big(\overline{T^*_f\U}\ \cap\ \im dt\big)\rightarrow \overline{T^*_f\U}\ \cap\ \im dt$ given by $\hat\pi^{-1}(x)=(x, d_xt)$.

\bigskip

We can now define our mild generalization of the relative polar curve, introduced in \cite{enrichpolar}:

\smallskip

\begin{defn}\label{def:relpolar} The {\bf relative polar set} is 
$$\big|\Gamma_{f,t}\big| =\pi\left(\overline{T^*_f\U}\ \cap\ \im dt\right).
$$

If $C$ is a $1$-dimensional irreducible component of $\big|\Gamma_{f,t}\big|$, then $\overline{T^*_f\U}$ and $\im dt$ must intersect properly over $C$ (i.e., along $\hat\pi^{-1}(C)$), and we define the {\bf multiplicity $m_C$ of $C$ in $\Gamma_{f,t}$} to be the intersection multiplicity of the cycles $\overline{T^*_f\U}$ and $\im dt$ over $C$.

If $\big|\Gamma_{f,t}|$ is purely 1-dimensional, then we define the {\bf relative polar curve} (cycle) to be  the proper push-forward of the intersection product of the cycles $\overline{T^*_f\U}$ and $\im dt$:
$$\Gamma^1_{f,t}:=\pi_*\left(\left[\overline{T^*_f\U}\right]\cdot [\im d t]\right)=\sum_Cm_C[C],$$
where the sum is over the irreducible components $C$ of $\big|\Gamma_{f,t}|$.
\end{defn}

\smallskip

\begin{rem}\label{rem:polarprops}  If $t$ is generic enough (see the next proposition) so that $\dim_\0 \big|\Gamma_{f,t}\big|\cap V(f)\leq 0$, then, near the origin,
$$
\big|\Gamma_{f,t}\big| = \overline{\Sigma(f,t) -\Sigma f},
$$
\medskip
\noindent and so  $\big|\Gamma_{f,t}\big| $ is the classical relative polar curve of Hamm, L\^e, and Teissier. 

However, frequently in these classic works, $t$ was also required to be generic enough so that the cycle $\Gamma^1_{f,t}$ was {\it reduced}, i.e., each component occurs with multiplicity $1$ in the cycle. This is equivalent to requiring that $\overline{T^*_f\U}$ transversely intersects $\im dt$ in $T^*\U$. Our results do not require $t$ to be so generic.

 In fact, with our definition, there is no need for $t$ to be a linear form; $t$ could be replaced by an arbitrary complex analytic function $g:\U\rightarrow \C$, which may have critical points. Then, the ``genericity'' that we need is still essentially the same; we require that $\dim_\0 V(g)\cap \big|\Gamma_{f,g}\big|\leq 0$.

\end{rem}

\medskip

We now prove many properties possessed by the relative polar set and curve.

\begin{prop}\label{prop:polarprops} Let $(t, z_1, \dots, z_n)$ be coordinates on $\U$. The relative polar set and curve have the following properties:

\begin{enumerate}

\item The dimension of every component of $\big|\Gamma_{f,t}\big|$ must be at least $1$, i.e., there are no isolated points in $\big|\Gamma_{f,t}\big|$.

\medskip

\item Suppose that $p\not\in\Sigma f$. Then, near $p$, 
$$\big|\Gamma_{f,t}\big| = V\left(\frac{\partial f}{\partial z_1}, \dots, \frac{\partial f}{\partial z_n}\right)=\Sigma(f,t).
$$
Thus, 
$$
\Sigma(f,t) = \Sigma f\cup \big|\Gamma_{f,t}\big|.
$$

\medskip

\item Furthermore, if $C$ is a $1$-dimensional component of $\big|\Gamma_{f,t}\big|$, and $C\not\subseteq \Sigma f$, then the multiplicity of $C$ in $\Gamma_{f,t}$, $m_C$ is precisely the multiplicity of the scheme 
$$
V\left(\frac{\partial f}{\partial z_1}, \dots, \frac{\partial f}{\partial z_n}\right)
$$
in an open neighborhood of a generic point $p$ on $C$, which equals the Milnor number $\mu_p(f_{|_{V(t-t(p))}}-f(p))$.

\medskip

\item There exists an open neighborhood $\W$ of the origin such that
$$
\W\cap \big|\Gamma_{f,t}\big|\cap V(t)=\W\cap\big|\Gamma_{f,t}\big|\cap V(f).
$$
In particular, $\dim_\0 \big|\Gamma_{f,t}\big|\cap V(t)\leq 0$ if and only if $\dim_\0 \big|\Gamma_{f,t}\big|\cap V(f)\leq 0$ (where the strict inequalities means that $\0\not\in \big|\Gamma_{f,t}\big|$, i.e., $\big|\Gamma_{f,t}\big|$ is empty at $\0$).

\medskip

\item For generic $t$, $\dim_\0 \big|\Gamma_{f,t}\big|\cap V(f)\leq 0$. 

Specifically, if $\strat$ is an $a_f$ stratification of $V(f)$ and $t$ is such that there exists an open neighborhood $\W$ of the origin such that, in $\W-\{\0\}$, $V(t)$ transversely intersects all $S\in\strat$, then $\dim_\0 \big|\Gamma_{f,t}\big|\cap V(f)\leq 0$.

In particular, if $\dim_\0\Sigma(f_{|_{V(t)}})=0$, then $\dim_\0 \big|\Gamma_{f,t}\big|\cap V(t)\leq 0$, which implies that $\dim_\0 \big|\Gamma_{f,t}\big|\cap V(f)\leq 0$.

\end{enumerate}

\end{prop}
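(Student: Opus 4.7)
Parts (1), (2), and (3) I would dispatch by direct local calculation. For (1), the dimension inequality in the smooth ambient $T^*\U$ gives $\dim_y(\overline{T^*_f\U}\cap\im dt) \geq (n+2)+(n+1)-(2n+2)=1$ at every point $y$, and since $\hat\pi$ is an analytic isomorphism to $|\Gamma_{f,t}|$, every component of $|\Gamma_{f,t}|$ has dimension at least $1$. For (2), off $\Sigma f$, $(T^*_f\U)_x = \C\,d_xf$; in coordinates $(t,z_1,\dots,z_n)$ the condition $d_xt\parallel d_xf$ is exactly $\partial f/\partial z_1 = \cdots = \partial f/\partial z_n = 0$, which also defines $\Sigma(f,t)$ by the Jacobian rank criterion. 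For (3), I would parametrize $\overline{T^*_f\U}$ locally as $\eta=\lambda\,df$ near $(p,d_pt)$ with $p\in C-\Sigma f$, use $\eta_0=1$ to solve for the unit $\lambda = (\partial f/\partial t)^{-1}$, and read off the scheme-theoretic intersection ideal $(\partial f/\partial z_1,\dots,\partial f/\partial z_n)$; its multiplicity along $C$ at $p$ is by construction the Milnor number of $f|_{V(t-t(p))}-f(p)$ at $p$.

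For (4), I would treat each $1$-dimensional irreducible component $C$ of $|\Gamma_{f,t}|$ through $\0$, splitting on whether $C\subseteq\Sigma f$. If $C\not\subseteq\Sigma f$, parametrize $C$ by $\gamma(s)$ and use (2) to obtain $\partial f/\partial z_i\equiv 0$ along $C$, giving
\[
\frac{d(f\circ\gamma)}{ds} \;=\; \Big(\frac{\partial f}{\partial t}\circ\gamma\Big)\cdot\frac{d(t\circ\gamma)}{ds},
\]
in which $(\partial f/\partial t)\circ\gamma$ is generically nonzero. The image of $(f,t)|_C$ in $\C^2$ is then an analytic curve germ at $(0,0)$, and a case check on this image --- a point, contained in $\{t=0\}$, contained in $\{f=0\}$, or off both axes --- combined with the identity yields $C\cap V(t) = C\cap V(f)$ near $\0$. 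If $C\subseteq\Sigma f$, take an $a_f$-stratification of $V(f)$ refined so that $C$ is a union of strata; letting $S=C_{\operatorname{reg}}$ be the open dense $1$-dimensional stratum contained in $C$, the $a_f$ condition forces any $\eta$ with $(x,\eta)\in\overline{T^*_f\U}$ and $x\in S$ to annihilate $T_xS=T_xC$, so $d_xt|_{T_xC}=0$, hence $t\equiv 0$ on $C$, and $C\subseteq\Sigma f\subseteq V(f)$ completes the case. The main obstacle is the $C\subseteq\Sigma f$ case, which needs both the stratification refinement and a careful invocation of the $a_f$ condition at $S$.

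For (5), fix an $a_f$-stratification $\strat$ of $V(f)$ (\cite{hironakastratflat}). Assuming $V(t)$ transversely meets every stratum $S\in\strat$ on $\W-\{\0\}$: if $x\in V(f)\cap|\Gamma_{f,t}|\cap(\W-\{\0\})$ lies in $S$, then $(x,d_xt)\in\overline{T^*_f\U}$ is a limit $\lambda_k\,d_{x_k}f\to d_xt$ with $x_k\notin\Sigma f$, and the $a_f$ condition forces $d_xt|_{T_xS}=0$, contradicting transversality. This gives $\dim_\0|\Gamma_{f,t}|\cap V(f)\leq 0$. For the last assertion, (2) (off $\Sigma f$) and the trivial containment on $\Sigma f$ give $|\Gamma_{f,t}|\subseteq V(\partial f/\partial z_1,\dots,\partial f/\partial z_n)$, whence $|\Gamma_{f,t}|\cap V(t)\subseteq\Sigma(f|_{V(t)})$; the hypothesis makes this $0$-dimensional at $\0$, and (4) transfers this to $V(f)$.
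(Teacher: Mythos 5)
Your treatments of Items (1)--(3) are correct and essentially identical to the paper's (the paper works in explicit cotangent coordinates $(z_0,\dots,z_n,w_0,\dots,w_n)$ with $\im dt = V(w_0-1,w_1,\dots,w_n)$, which is just your parametrization $\eta=\lambda\,df$ written out). For Item (4), however, your component-by-component argument has a genuine gap: you only treat the \emph{one-dimensional} irreducible components of $\big|\Gamma_{f,t}\big|$ through $\0$, but at this stage nothing forces the polar set to be purely one-dimensional --- Item (1) only gives a lower bound on dimensions, and the whole point of Items (4) and (5) is to identify when the relevant intersections are small. Components of dimension $\geq 2$ through $\0$ really do occur (e.g.\ for $f=z_1^2+tz_2^2+tz_3^2$ on $\C^4$, the plane $V(z_1,t)$ is a two-dimensional component of $\big|\Gamma_{f,t}\big|$ not contained in $\Sigma f$), and your explicit parametrization $\gamma(s)$ and the ``analytic curve germ in $\C^2$'' case check do not apply to them. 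The paper avoids the decomposition entirely: it argues by contradiction, assuming $\0\in\overline{\big|\Gamma_{f,t}\big|\cap V(t)-V(f)}$ or $\0\in\overline{\big|\Gamma_{f,t}\big|\cap V(f)-V(t)}$, and applies the curve selection lemma to produce an analytic arc $\alpha(u)$ along which exactly your two computations (the chain rule off $\Sigma f$; the conormal $a_f$ condition $\overline{T^*_f\U}\cap\pi^{-1}(S)\subseteq T^*_S\U$ with $\alpha'(u)\in T_{\alpha(u)}S$ inside $V(f)$) yield the contradiction. Your two cases are the right local mechanisms, but they need to be run along selected curves rather than along components; as written the argument does not establish the stated set equality.

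In Item (5) there is a second, smaller gap: from $x\in V(f)\cap\big|\Gamma_{f,t}\big|\cap(\W-\{\0\})$ lying in a stratum $S$, the $a_f$ condition does give $d_xt|_{T_xS}=0$, but this only contradicts ``$V(t)$ transversely intersects $S$'' if $x\in V(t)$ --- transversality imposes no condition at points of $S$ off $V(t)$. You must either first invoke Item (4) to conclude $x\in V(t)$, or argue (as the paper does parenthetically) that stratified critical points of $t$ near $\0$ necessarily lie in $V(t)$, again by a curve-selection/chain-rule argument. Note also that the paper's Item (5) contains an existence claim --- that generic linear $t$ satisfy the transversality hypothesis --- which the paper proves by bounding the dimension of the fibers $\Proj(\overline{T^*_S\U})_\0$ over the origin by $n-1$ for $S\neq\{\0\}$; your write-up assumes the hypothesis and never addresses why such $t$ exist. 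Your derivation of the final implication from $\dim_\0\Sigma(f_{|_{V(t)}})=0$ via the containment $\big|\Gamma_{f,t}\big|\cap V(t)\subseteq \Sigma(f_{|_{V(t)}})$ is fine and matches the paper.
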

\begin{proof} While all of these properties can be concluded from results in  \cite{enrichpolar}, the proofs and statements there are far more abstract than we need here. Hence, we will give more basic proofs which apply to our current setting.

\bigskip

\noindent{\bf Item (1)}: This is trivial. The dimension of every component of  $\overline{T^*_f\U}\ \cap\ \im dt$ is at least 
$$
\dim \overline{T^*_f\U}+\dim(\im dt) - \dim T^*\U=(n+2)+(n+1)-2(n+1) =1,
$$
and $\pi$ induces an isomorphism from this intersection to $|\Gamma_{f,t}|$.

\bigskip 

\noindent {\bf Items (2) and (3)}: We use $(w_0, w_1, \dots, w_n)$ for our cotangent coordinates and it is notationally  convenient to let $z_0:=t$, so that we have 
$$
(z_0, z_1, \dots, z_n, w_0, w_1, \dots, w_n)
$$
for coordinates on $T^*\U$. Note that, in these coordinates $\im dt=\im dz_0$ is equal to  $$V(w_0-1, w_1, \dots, w_n).$$
 Suppose that $p\not\in\Sigma f$. Then, in a neighborhood of $(p, d_pt)$, $w_0\neq 0$ and $\overline{T^*_f\U}=T^*_f\U$ is given, as a scheme, by
$$
\overline{T^*_f\U}=V\left(w_i\frac{\partial f}{\partial z_j}-w_j\frac{\partial f}{\partial z_i} \right)_{0\leq i,j\leq n} \ = \ V\left(w_0\frac{\partial f}{\partial z_j}-w_j\frac{\partial f}{\partial z_0} \right)_{1\leq j\leq n}
$$
Therefore, in a neighborhood of $p$,
$$
\pi\left(\overline{T^*_f\U}\cap \im dt\right)=\pi\left(V\left(w_0\frac{\partial f}{\partial z_j}-w_j\frac{\partial f}{\partial z_0} \right)_{1\leq j\leq n}\cap  \ V(w_0-1, w_1, \dots, w_n)=
\right)
$$
$$
\pi\left(V\left(\frac{\partial f}{\partial z_1}, \dots, \frac{\partial f}{\partial z_n}, w_0-1, w_1, \dots, w_n\right)\right)= V\left(\frac{\partial f}{\partial z_1}, \dots, \frac{\partial f}{\partial z_n}\right).
$$
This proves (2).

Now, if $C'$ is a $1$-dimensional component of $\overline{T^*_f\U}\cap\im dt$, then, at generic points of $C'$, 
$$
\left\{w_0\frac{\partial f}{\partial z_j}-w_j\frac{\partial f}{\partial z_0} \right\}_{1\leq j\leq n}, w_0-1, w_1, \dots, w_n
$$
is a regular sequence, and so the multiplicity of $C'$ in $\overline{T^*_f\U}\cdot \im dt$ is the multiplicity of the scheme
$$
V\left(\frac{\partial f}{\partial z_1}, \dots, \frac{\partial f}{\partial z_n}, w_0-1, w_1, \dots, w_n\right)
$$
along $C'$. The conclusion of (3) follows immediately.

\bigskip

\noindent {\bf Item (4)}: 

\medskip

Suppose this is false. Then either $\0\in\overline{\big|\Gamma_{f,t}\big|\cap V(t)- V(f)}$ or $\0\in\overline{\big|\Gamma_{f,t}\big|\cap V(f)- V(t)}$.

\medskip

First, we fix an $a_f$ stratification $\strat$  of $V(f)$ with connected strata; such a stratification exists by \cite{hironakastratflat}. As we used earlier in \propref{prop:contain}, the conormal characterization of an $a_f$ stratification is that, for all $S\in\strat$, 
$$
\overline{T^*_f\U}\cap\pi^{-1}(S)\subseteq T^*_S\U.
$$

Now, suppose that  $\0\in\overline{\big|\Gamma_{f,t}\big|\cap V(t)- V(f)}$. Let $\alpha(u)$ be a complex analytic curve such that $\alpha(0)=\0$ and, for $u\neq0$, $\alpha(u)\in \big|\Gamma_{f,t}\big|\cap V(t)-V(f)$. For $0<|u|\ll 1$, $\alpha(u)\not\in V(f)$ and so $\alpha(u)\not\in \Sigma f$. By Item (2), for all $u$, 
$$\alpha(u)\in V\left(\frac{\partial f}{\partial z_1}, \dots, \frac{\partial f}{\partial z_n}\right).
$$
Thus, by the Chain Rule, and using that $\alpha(u)\in V(t)$,
$$
\big(f(\alpha(u))\big)' = (t_{|_{\alpha(u)}})'\,\frac{\partial f}{\partial t}_{\big |_{\alpha(u)}}\equiv 0.
$$
Hence, $f(\alpha(u))$ is a constant, and that constant must be $0$; this contradicts that $\alpha(u)\not\in V(f)$ for $u\neq 0$.

\medskip

Suppose instead that $\0\in\overline{\big|\Gamma_{f,t}\big|\cap V(f)- V(t)}$.  Let $\alpha(u)$ be a complex analytic curve such that $\alpha(0)=\0$ and, for $u\neq0$, $\alpha(u)\in \big|\Gamma_{f,t}\big|\cap V(f)-V(t)$. Then, there exists a unique stratum $S$ in our $a_f$ stratification such that, for $0<|u|\ll 1$, $\alpha(u)\in S$. Since $\alpha(u)\in \big|\Gamma_{f,t}\big|$, 
$$(\alpha(u), d_{\alpha(u)}t)\in \overline{T^*_f\U}\cap \pi^{-1}(S)\subseteq T^*_S\U.
$$
As $\alpha'(u)\in T_{\alpha(u)}S$, we conclude that $d_{\alpha(u)}t(\alpha'(u))=(t_{|_{\alpha(u)}})'\equiv 0$. Thus, $t_{|_{\alpha(u)}}$ must be a constant, and that constant must be $0$; this contradicts that $\alpha(u)\not\in V(t)$ for $u\neq 0$.

\medskip

\noindent {\bf Item (5)}: Finally, we will show that, for a generic linear form $t$, $\dim_\0 \big|\Gamma_{f,t}\big|\cap V(f)\leq 0$. By generic, we mean that its projective class is in an open dense subset of the projectivized dual of $\C^{n+1}$. The argument that we give is due to Hamm and L\^e in \cite{hammlezariski}.

Recall that we fixed an $a_f$ stratification $\strat$ for $V(f)$ in Item (4). For each stratum $S\in\strat$, $\overline{T^*_S\U}$ is irreducible and conic of dimension $n+1$. Writing $\Proj(\overline{T^*_S\U})$ for its projectivization in the cotangent directions, we have that $\Proj (\overline{T^*_S\U})$ is an irreducible $n$-dimensional subvariety 
$$\Proj(\overline{T^*_S\U})\subseteq \Proj(T^*\U)\cong \U\times \Proj^n.
$$
Therefore, if $S\neq \{\0\}$, then the fiber over the origin $\Proj(\overline{T^*_S\U})_\0$ is a subvariety of $\{\0\}\times \Proj^n\cong\Proj^n$ of dimension at most $n-1$. Consider the open dense subset $\Omega$ of $\Proj^n$ given by
$$
\Omega:=\Proj^n-\bigcup_{S\neq\{\0\}}\Proj(\overline{T^*_S\U})_\0.
$$
Let $t$ be such that its projective class $[d_\0t]$ is in $\Omega$, i.e., $t$ is such that
$$
d_\0t\not\in \bigcup_{S\neq\{\0\}}\left(\overline{T^*_S\U}\cap \pi^{-1}(\0)\right);
$$ 
this is equivalent to selecting $t$ so that there exists an open neighborhood $\W$ such that $V(t)$ transversely intersects all $S\in\strat$ inside $\W-\{\0\}$ (where we use that stratified critical points of $t$ near $\0$ must occur inside $V(t)$).

Thus, 
$$
\pi^{-1}(\W)\cap \im dt\cap \bigcup_{S\neq\{\0\}}\overline{T^*_S\U}=\emptyset.
$$

Now, using that $\strat$ is an $a_f$ stratification, we find
$$
\pi^{-1}(\W)\cap\overline{T^*_f\U}\cap (f\circ\pi)^{-1}(0)\cap \im dt   \ =  \ \pi^{-1}(\W)\cap \overline{T^*_f\U}\cap \pi^{-1}\big(\bigcup_{S\in\strat}S\big)\cap \im dt \ \subseteq
$$
$$
\pi^{-1}(\W)\cap\big(\bigcup_{S\in\strat}T^*_S\U\big)\cap \im dt = \pi^{-1}(\W)\cap T^*_{\{\0\}}\U\cap \im dt.
$$

Therefore, $\W\cap |\Gamma_{f,t}|\cap V(f)\subseteq \{\0\}$, i.e., $\dim_\0 \big|\Gamma_{f,t}\big|\cap V(f)\leq 0$.

\smallskip

The last statement of this item follows from the $a_f$ statement, but is trivial to prove independently.
$$
\Sigma(f_{|_{V(t)}}) = V\left(\frac{\partial f}{\partial z_1}, \dots, \frac{\partial f}{\partial z_n}, t\right),
$$
which, by Item (2), is equal to
$$
\left(\Sigma f\cup \big|\Gamma_{f,t}\big|\right)\cap V(t)
$$
If the dimension of this at the origin is at most $0$, then certainly $\dim_\0 \big|\Gamma_{f,t}\big|\cap V(t)\leq 0$.
\end{proof}

\medskip

\begin{rem} Given the previous proposition, the definition of the relative polar curve given in \defref{def:relpolar} may seem needlessly convoluted. Why not just define $\Gamma_{f,t}$ to be $\overline{\Sigma(f,t)-\Sigma f}$ and, if this is $1$-dimensional, give it the cycle structure in which the coefficient of a component $C$ is the Milnor number of $f-f(p)$, restricted to a transverse hyperplane slice to $C$ at a generic point $p\in C$?

The answer is that, in the next two sections, we will use many results from \cite{enrichpolar}, where we saw that the property that we needed for $t$ to possess was precisely that $(\0, d_\0t)$ be an isolated point of
$$
\overline{T^*_f\U}\cap (f\circ\pi)^{-1}(0)\cap \im dt.
$$
Thus, defining the relative polar curve as we did in \defref{def:relpolar} allows us to state the property which we need satisfied in a very simple way; we require $\dim_\0 \big|\Gamma_{f,t}\big|\cap V(f)\leq 0$.
\end{rem}

\medskip

\section{Generalizations of Classical results}

The only theorem in this section contains, on the level of cohomology,  generalizations of L\^e's main result in \cite{leattach}, a result of Siersma in \cite{siersmalink}, and a result of L\^e and Perron in \cite{leperron}. These are generalizations in the sense that the hypothesis needed is significantly weaker than those used in the earlier theorems.

\smallskip

We let $f_0:=f_{|_{V(t)}}$, and we let $F_{f, \0}$ and $F_{f_0, \0}$ denote, respectively, the Milnor fibers of $f$ and $f_0$ at the origin.  Also let $B^\circ_\epsilon(\0)$ denote the open ball of radius $\epsilon>0$, centered at $\0$.

\smallskip

Recall that, from \remref{rem:polarprops}, $\dim_\0 \big|\Gamma_{f,t}\big|\cap V(t)\leq 0$ if and only if $\dim_\0 \big|\Gamma_{f,t}\big|\cap V(f)\leq 0$.

\medskip

\begin{thm}\label{thm:vangen} Suppose that $\dim_\0 \big|\Gamma_{f,t}\big|\cap V(f)\leq 0$ (or, equivalently, $\dim_\0 \big|\Gamma_{f,t}\big|\cap V(t)\leq 0$).  

Then, for $0<|v|\ll |a|\ll \epsilon\ll 1$,
\begin{enumerate}
\item For all $k$, there are isomorphisms
$$
H^k(B^\circ_\epsilon(\0)\cap f^{-1}(v), \ B^\circ_\epsilon(\0)\cap t^{-1}(a)\cap f^{-1}(v);\,\Z)\cong $$
$$
H^k(B^\circ_\epsilon(\0)\cap f^{-1}(v), \ B^\circ_\epsilon(\0)\cap V(t)\cap f^{-1}(v);\,\Z)=
$$
$$
H^k(F_{f, \0}, \ F_{f_0,\0};\,\Z)\cong\begin{cases} \Z^\tau, &\textnormal{ if } k=n,\\
0, &\textnormal{ if } k\neq n,
\end{cases}
$$
where $\tau:= (\Gamma^1_{f,t}\cdot V(f))_\0$.

\bigskip

\item The complex link of $V(f)$ at the origin (with respect to $t$), $$\cL_{V(f), \0}:=B^\circ_\epsilon(\0)\cap V(f)\cap t^{-1}(a),$$ has reduced cohomology given by
$$
\widetilde H^k(\cL_{V(f), \0}; \Z) \cong \begin{cases} \Z^\gamma, &\textnormal{ if } k=n-1,\\
0, &\textnormal{ if } k\neq n-1,
\end{cases}
$$
where $\gamma:=(\Gamma^1_{f,t}\cdot V(t))_\0$.

 \end{enumerate}
\end{thm}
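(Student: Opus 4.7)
The plan is to reduce both items to statements about vanishing-cycle functors applied to perverse sheaves on $\U$, exploiting the hypothesis as the assertion that $(\0,d_\0 t)$ is an isolated characteristic point for the relevant perverse sheaf, and then to extract ranks via the micro-local index theorem.

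\textbf{Item (1).} The first equality of cohomology groups I would handle by a Thom-isotopy argument for $(f,t):B^\circ_\epsilon(\0)\to\C^2$: by Proposition \ref{prop:polarprops}(2) the stratified discriminant of $(f,t)$ is contained in the image of $\Sigma f\cup\Gamma^1_{f,t}$, and Proposition \ref{prop:polarprops}(4) forces all crossings of the line $\{f=v\}$ with the image of $\Gamma^1_{f,t}$ to cluster near $(v,0)$ as $v\to 0$, so for generic $v$ with $0<|v|\ll|a|\ll\epsilon$ the straight-line path from $(v,0)$ to $(v,a)$ in $\{f=v\}$ avoids the discriminant, yielding a diffeomorphism of pairs that identifies $F_{f_0,\0}$ with $B^\circ_\epsilon\cap f^{-1}(v)\cap t^{-1}(a)$ inside $F_{f,\0}$. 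For the concentration and rank, I note that the stalk $\bigl(\phi_t[-1]\psi_f[-1]\Z^\bullet_\U[n+1]\bigr)_\0$ computes $H^{*+n}(F_{f,\0},F_{f_0,\0};\Z)$; since $\psi_f[-1]\Z^\bullet_\U[n+1]$ is perverse with micro-support $\overline{T^*_f\U}\cap(f\circ\pi)^{-1}(0)$ by Theorem 3.3 of \cite{singenrich}, the hypothesis says precisely that $(\0,d_\0 t)$ is isolated in the intersection of this micro-support with $\im dt$. Non-characteristic pullback then forces $\phi_t[-1]\psi_f[-1]\Z^\bullet_\U[n+1]$ to be perverse with isolated support at $\0$, hence concentrated in degree $0$. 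The rank $\tau$ is the micro-local intersection $[\overline{T^*_f\U}]\cdot[\im dt]$ at $(\0,d_\0 t)$, which under the proper push-forward in Definition \ref{def:relpolar} equals $(\Gamma^1_{f,t}\cdot V(f))_\0$ (with Proposition \ref{prop:polarprops}(4) allowing the passage between $V(f)$ and $V(t)$ bookkeeping).

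\textbf{Item (2).} The plan is to apply $\phi_t[-1]$ to the short exact sequence of perverse sheaves
$$
0\to\Z^\bullet_{V(f)}[n]\to\psi_f[-1]\Z^\bullet_\U[n+1]\to\phi_f[-1]\Z^\bullet_\U[n+1]\to 0
$$
from the proof of Proposition \ref{prop:contain}. Item (1) gives isolated support at $\0$ for the image of the middle term; the containment of micro-supports $(\dagger)$ in that proof gives the same for the right term (non-characteristicity for $\psi_f$ at $(\0,d_\0 t)$ implies it for $\phi_f$). Exactness in the perverse t-structure then forces $\phi_t[-1]\Z^\bullet_{V(f)}[n]$ to be perverse with isolated support at $\0$, hence concentrated in degree $0$. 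Identifying this stalk, up to a shift of $n-1$, with $\widetilde H^*(\cL_{V(f),\0};\Z)$ gives the vanishing for $k\neq n-1$. The rank is the micro-local index $\cc(\Z^\bullet_{V(f)}[n])\cdot[\im dt]$ at $(\0,d_\0 t)$; using additivity of characteristic cycles across the above short exact sequence together with Proposition \ref{prop:contain}, this trades for an intersection of $\overline{T^*_f\U}$ with $\im dt$ taken over $V(t)$ rather than $V(f)$, yielding $(\Gamma^1_{f,t}\cdot V(t))_\0=\gamma$.

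\textbf{Main obstacle.} The delicate step in both items is matching the micro-local intersection numbers produced by the index theorem to the base-level polar intersection numbers $\tau$ and $\gamma$. For (1) this is fairly direct from the push-forward definition of $\Gamma^1_{f,t}$, but for (2) one must carefully unpack $\cc(\Z^\bullet_{V(f)}[n])$, track the contributions of strata of $\Sigma f$ deeper than $V(f)_{\operatorname{reg}}$, and confirm that they assemble into $(\Gamma^1_{f,t}\cdot V(t))_\0$ rather than $(\Gamma^1_{f,t}\cdot V(f))_\0$. This combinatorial shift from $V(f)$ to $V(t)$ is the geometric fingerprint of passing from the Milnor fiber of $f$ to the complex link cut by $t$, and verifying it rigorously via Proposition \ref{prop:contain} is the hardest point of the argument.
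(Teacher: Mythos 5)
Your route is genuinely different from the paper's: the paper proves both items essentially by citation, invoking Theorem 3.14 and Corollary 4.6 of \cite{enrichpolar} and then translating, whereas you attempt to rederive everything from perversity plus the isolated-characteristic-point hypothesis. The qualitative half of your argument is sound and is indeed the mechanism underlying \cite{enrichpolar}: $\psi_f[-1]\Z^\bullet_\U[n+1]$ is perverse, the hypothesis says exactly that $(\0,d_\0t)$ is isolated in $SS\big(\psi_f[-1]\Z^\bullet_\U[n+1]\big)\cap\im dt$ (cf.\ the remark after \propref{prop:polarprops}), and since $\phi_t[-1]$ preserves perversity, isolated support at $\0$ forces concentration in degree $0$; the same works for $\Z^\bullet_{V(f)}[n]$ via the inclusion of micro-supports coming from the short exact sequence in the proof of \propref{prop:contain}. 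The gap is in the quantitative half --- the identification of the ranks with $\tau$ and $\gamma$ --- which is the actual content of the theorem.

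Concretely: the micro-local index theorem computes $\operatorname{rank}H^0(\phi_t[-1]\Pdot)_\0$ as a local intersection number of the characteristic cycle $\cc(\Pdot)$ --- a cycle \emph{with multiplicities} --- against $\im dt$, but the only input available from the paper (Theorem 3.3 of \cite{singenrich}) is a set-theoretic equality of micro-supports, which does not determine those multiplicities. For Item (1) you would need the cycle-level identity $\cc\big(\psi_f[-1]\Z^\bullet_\U[n+1]\big)=\big[\overline{T^*_f\U}\big]\cdot\big[(f\circ\pi)^{-1}(0)\big]$ before the projection formula can convert the micro-local number into $(\Gamma^1_{f,t}\cdot V(f))_\0$; this is a genuine theorem, not a formality, and your sketch asserts it rather than proves it (your phrase ``the micro-local intersection $[\overline{T^*_f\U}]\cdot[\im dt]$ at $(\0,d_\0t)$'' is not even zero-dimensional until one cuts with $(f\circ\pi)^{-1}(0)$). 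For Item (2) you acknowledge that converting $\cc(\Z^\bullet_{V(f)}[n])=\cc(\psi_f[-1]\Z^\bullet_\U[n+1])-\cc(\phi_f[-1]\Z^\bullet_\U[n+1])$ into the number $(\Gamma^1_{f,t}\cdot V(t))_\0$ is ``the hardest point'' and you do not carry it out; that is precisely where the discrepancy $\tau-\gamma$ must be matched against the vanishing-cycle contribution, and nothing in the sketch produces that identity. Note that the paper's own proof of Item (2) sidesteps all characteristic-cycle bookkeeping: it applies Item 2 of Corollary 4.6 of \cite{enrichpolar} to the pair $(F_{t,\0},F_{t_{|_{V(f)}},\0})$, observes that $F_{t_{|_{V(f)}},\0}=\cL_{V(f),\0}$ and that $F_{t,\0}$ is contractible, and concludes from the long exact sequence of the pair --- a decomposition quite different from your short exact sequence of perverse sheaves.
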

\begin{proof}  In the statement of the theorem, we have suppressed any use of the derived category, vanishing cycles, and nearby cycles. However, we will necessarily need them to translate the results \cite{enrichpolar} into the forms that appear in \thmref{thm:vangen}.

First, 
$$
H^k(B^\circ_\epsilon(\0)\cap f^{-1}(v), \ B^\circ_\epsilon(\0)\cap t^{-1}(a)\cap f^{-1}(v);\,\Z)\cong H^{k-n}(\phi_g[-1]\psi_f[-1]\Z^\bullet_\U[n+1])_\0.
$$
 Now, Theorem 3.14 of \cite{enrichpolar} implies that 
$$
H^k(B^\circ_\epsilon(\0)\cap f^{-1}(v), \ B^\circ_\epsilon(\0)\cap t^{-1}(a)\cap f^{-1}(v);\,\Z)\cong \begin{cases} \Z^\tau, &\textnormal{ if } k=n,\\
0, &\textnormal{ if } k\neq n.
\end{cases}
$$
That this is also isomorphic to $H^k(F_{f, \0}, \ F_{f_0,\0};\,\Z)$ is immediate from Item 1 of Corollary 4.6 of \cite{enrichpolar}.

Item 2 of Corollary 4.6 of \cite{enrichpolar} tells us that
$$
H^{n+k}(F_{t, \0}, F_{t_{|_{V(f)}},\0}; \Z)
$$
is non-zero only if $k=0$ and, if $k=0$, is isomorphic to $\Z^\gamma$. As $F_{t, \0}$ is contractible, Item 2 of \thmref{thm:vangen} follows from the long exact sequence of the pair $(F_{t, \0}, F_{t_{|_{V(f)}},\0})$.
\end{proof}

\medskip

\begin{rem} It is important to note that we have said that \thmref{thm:vangen} generalizes earlier works {\bf on the level of cohomology}. In fact, these earlier works have stronger conclusions.

The first isomorphism of Item 1 of \thmref{thm:vangen} is the cohomological version of an isotopy result proved in L\^e and Perron in \cite{leperron} (their ambient space is $\C^3$, but that is irrelevant to their proof). In \cite{leattach}, L\^e actually proves that $F_{f,\0}$ is obtained, up to homotopy, from $F_{f_0, \0}$ by attaching $\tau$ $n$-cells. In \cite{siersmalink}, Siersma proves that $\cL_{V(f), \0}$ has the homotopy-type of a bouquet of $\gamma$ $(n-1)$-spheres.

\end{rem}

\medskip

\section{IPA-deformations}

In Section 3, we avoided referring to hypercohomology and vanishing cycles in our statements of results. In this section, that is not possible. However, we shall give some examples which will, hopefully, make the results more accessible.

\begin{defn}\label{def:ipadef}  Let $\W$ be an open neighborhood of the origin in $\C^n$, and suppose we have a complex analytic function $f_0:(\W,\0)\rightarrow(\C,0)$. Then, a {\bf deformation of $f_0$ with isolated polar activity at $\0$, or an IPA-deformation of $f_0$, with parameter $t$}, is a complex analytic $f:\D^\circ\times\W\rightarrow\C$, where $\D^\circ$ is an open disk around the origin in $\C$, such that, if $t$ denotes the projection onto $\D^\circ$ and we identify $\W$ with $\{0\}\times\W$, then $f_0=f_{|_{V(t)}}$ and $\dim_\0 \big|\Gamma_{f, t}\big|\cap V(t)\leq 0$ (or, equivalently, $\dim_\0 \big|\Gamma_{f, t}\big|\cap V(f)\leq 0$).

A {\bf null IPA-deformation} is an IPA-deformation for which $\0\not\in \big|\Gamma_{f, t}\big|$.
\end{defn}

\bigskip

{\bf Throughout the remainder of this paper, we assume that $f$ is an IPA-deformation of $f_0:(\W,\0)\rightarrow(\C,0)$ at $\0$, where $\W$ is an open neighborhood of the origin in $\C^n$}.

\bigskip

The following proposition tells us that the critical locus is well-behaved in IPA-deformations.

\begin{prop}\label{prop:dimbehave} The following cases can occur:
\begin{itemize}
\item If $\0\not\in \Sigma f$, then, in a neighborhood of the origin, $\Sigma(f_0)=|\Gamma_{f,t}|\cap V(t)$, and so either $\0\not\in\Sigma{f_0}$ or $\dim_\0\Sigma(f_0)=0$.

\medskip

\item If $\0\in\Sigma f$, then in a neighborhood of the origin, $\Sigma(f_0)=\Sigma f\cap V(t)$ and, if $\dim_\0\Sigma f\geq 1$, then $\dim_\0\Sigma f\cap V(t)=\dim_\0\Sigma f-1$.
\end{itemize}
\end{prop}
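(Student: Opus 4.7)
The plan is to reduce everything to the Jacobian description of critical loci in the product coordinates $(t,z_1,\dots,z_n)$ on $\U := \D^\circ\times \W$, and then feed in the two key inputs already available: Item~(2) of \propref{prop:polarprops} (which computes $\Sigma(f,t)$ outside $\Sigma f$ via the polar set) and \propref{prop:contain} (which places the conormals of components of $\Sigma f$ inside $\overline{T^*_f\U}$). Since $t$ is literally a coordinate, the Jacobian matrix of $(f,t)$ has a constant row $(1,0,\dots,0)$, so $\Sigma(f,t)=V(\partial f/\partial z_1,\dots,\partial f/\partial z_n)$ and $\Sigma f_0=\Sigma(f,t)\cap V(t)$. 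I will use these identities repeatedly.

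For the first bullet, assume $\0\notin\Sigma f$. Item~(2) of \propref{prop:polarprops} then gives $|\Gamma_{f,t}|=\Sigma(f,t)$ in a neighborhood of $\0$, so intersecting with $V(t)$ yields $\Sigma f_0=|\Gamma_{f,t}|\cap V(t)$ near $\0$. The IPA-hypothesis $\dim_\0 |\Gamma_{f,t}|\cap V(t)\leq 0$ then forces $\0\notin\Sigma f_0$ or $\dim_\0\Sigma f_0=0$.

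For the second bullet, assume $\0\in\Sigma f$. Item~(2) of \propref{prop:polarprops} gives $\Sigma(f,t)=\Sigma f\cup |\Gamma_{f,t}|$, so
$$
\Sigma f_0\;=\;\Sigma(f,t)\cap V(t)\;=\;\bigl(\Sigma f\cap V(t)\bigr)\cup\bigl(|\Gamma_{f,t}|\cap V(t)\bigr).
$$
The IPA-hypothesis makes $|\Gamma_{f,t}|\cap V(t)$ either empty or the single point $\{\0\}$ in a neighborhood of $\0$; in either case this set is contained in $\Sigma f$ (using $\0\in\Sigma f$), giving $\Sigma f_0=\Sigma f\cap V(t)$ near $\0$.

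The main obstacle, and the only genuinely non-formal step, is the dimension equality $\dim_\0\Sigma f\cap V(t)=\dim_\0\Sigma f-1$ when $\dim_\0\Sigma f\geq 1$. One inequality, $\dim_\0\Sigma f\cap V(t)\geq \dim_\0\Sigma f-1$, is Krull's principal ideal theorem for the single equation $t=0$ on $\Sigma f$. For the reverse inequality I need to exclude the possibility that some $1$-dimensional (or higher) irreducible component $Y$ of $\Sigma f$ through $\0$ is entirely contained in $V(t)$. Suppose such a $Y$ exists. Then for every $y\in Y_{\operatorname{reg}}$, $T_yY\subseteq \ker d_yt$, so $(y,d_yt)\in T^*_{Y_{\operatorname{reg}}}\U$. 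By \propref{prop:contain}, $\overline{T^*_{Y_{\operatorname{reg}}}\U}\subseteq \overline{T^*_f\U}$, so $(y,d_yt)\in \overline{T^*_f\U}\cap \im dt$, whence $Y_{\operatorname{reg}}\subseteq |\Gamma_{f,t}|$ and therefore $Y\subseteq |\Gamma_{f,t}|\cap V(t)$. This contradicts the IPA-hypothesis $\dim_\0 |\Gamma_{f,t}|\cap V(t)\leq 0$, completing the argument.
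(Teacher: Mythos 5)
Your proposal is correct and follows essentially the same route as the paper: the Jacobian decomposition $\Sigma(f_0)=\bigl(\Sigma f\cap V(t)\bigr)\cup\bigl(|\Gamma_{f,t}|\cap V(t)\bigr)$ via Item~(2) of \propref{prop:polarprops}, followed by ruling out a component $Y\subseteq V(t)$ of $\Sigma f$ through $\0$ by noting that \propref{prop:contain} would then force $Y\subseteq|\Gamma_{f,t}|\cap V(t)$, contradicting the IPA hypothesis. Your explicit appeal to Krull's principal ideal theorem for the inequality $\dim_\0\Sigma f\cap V(t)\geq\dim_\0\Sigma f-1$ merely makes precise a step the paper leaves implicit.
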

\begin{proof} We use $(t, z_1, \dots, z_n)$ for coordinates on $\U$. Then,
$$
\Sigma(f_0) = V\left(\frac{\partial f}{\partial z_1}, \dots, \frac{\partial f}{\partial z_n}, t\right) = \left(\Sigma f\cup |\Gamma_{f, t}|\right)\cap V(t) = \left(\Sigma f\cap V(t)\right)\cup \left(|\Gamma_{f, t}|\cap V(t)\right).
$$
As we are assuming that $\dim_\0|\Gamma_{f, t}|\cap V(t)\leq 0$, all of the conclusions follow, with the exception of the final dimension claim. 

The only way that we can have $\dim_\0\Sigma f\cap V(t)\neq\dim_\0\Sigma f-1$ is if $V(t)$ contains an irreducible component $Y$ of $\Sigma f$ which contains the origin. However, \propref{prop:contain} tells us that $\overline{T^*_{Y_{\operatorname{reg}}}\U}\subseteq \overline{T^*_f\U}$. Furthermore, $Y\subseteq V(t)$ implies that $\pi\left(\overline{T^*_{Y_{\operatorname{reg}}}\U}\cap \im dt\right)=Y$. But this implies that $Y\subseteq |\Gamma_{f,t}|\cap V(t)$. This would contradict that the dimension of $Y$ is at least $1$ and that $f$ is an IPA-deformation.
\end{proof}
\medskip

Below, we will use the Milnor fiber, $F_{t, \0}$, of $t$ at the origin, and consider the space 
$$F_{t, \0}\cap \Sigma f = B^\circ_\epsilon(\0)\cap \Sigma f\cap t^{-1}(a),
$$
where $0< |a|\ll \epsilon\ll 1$. This should be thought of as the ``complex link of $\Sigma f$ at $\0$'' (with respect to $t$). 

The vanishing cycles along $f$ are denoted by $\phi_f$, and $\hyp$ denotes hypercohomology with respect to a complex of sheaves.

\begin{thm}\label{thm:ipa} There are isomorphisms:

\smallskip

\noindent For $k\neq n-1$,
$$
\widetilde H^{k}(F_{f_0,\0}; \Z)\ \cong\  \hyp^k(F_{t,\0}\cap\Sigma f; \phi_f\Z_\U^\bullet),
$$
and 
$$
\widetilde H^{n-1}(F_{f_0,\0}; \Z) \ \cong \ \Z^\gamma\ \oplus\ \hyp^{n-1}(F_{t,\0}\cap\Sigma f; \phi_f\Z_\U^\bullet),
$$
\medskip
\noindent where $\gamma:=(\Gamma^1_{f,t}\cdot V(t))_\0$.

In particular, $\operatorname{rank} \widetilde H^{n-1}(F_{f_0,\0}; \Z)\geq \gamma$.
\end{thm}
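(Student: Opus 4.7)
My plan combines two long exact sequences and compares them. The first arises from the pair $(F_{f,\0}, F_{f_0,\0})$: by \thmref{thm:vangen}(1), the relative cohomology $H^k(F_{f,\0}, F_{f_0,\0};\Z)$ is $\Z^\tau$ for $k = n$ and vanishes otherwise. Since $F_{f_0,\0}$ has complex dimension $n-1$, we have $\widetilde H^n(F_{f_0,\0}) = 0$, so the long exact sequence of the pair yields isomorphisms $\widetilde H^k(F_{f,\0}) \cong \widetilde H^k(F_{f_0,\0})$ for $k \leq n-2$, together with a four-term exact sequence
$$0 \to \widetilde H^{n-1}(F_{f,\0}) \to \widetilde H^{n-1}(F_{f_0,\0}) \to \Z^\tau \to \widetilde H^n(F_{f,\0}) \to 0.$$

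The second long exact sequence comes from the distinguished triangle
$$i_\0^*\phi_f\Z^\bullet_\U \to (\psi_t\phi_f\Z^\bullet_\U)_\0 \to (\phi_t\phi_f\Z^\bullet_\U)_\0 \xrightarrow{+1},$$
where $i_\0 \colon \{\0\} \hookrightarrow \U$ is the inclusion. Because $\phi_f\Z^\bullet_\U$ is supported on $\Sigma f$, the nearby-cycles stalk identifies with $R\Gamma(F_{t,\0}\cap\Sigma f;\phi_f\Z^\bullet_\U)$; and the left-hand stalk has cohomology $\widetilde H^*(F_{f,\0})$. This gives
$$\cdots \to \widetilde H^k(F_{f,\0}) \to \hyp^k(F_{t,\0}\cap\Sigma f;\phi_f\Z^\bullet_\U) \to H^k(\phi_t\phi_f\Z^\bullet_\U)_\0 \to \widetilde H^{k+1}(F_{f,\0}) \to \cdots.$$

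The main obstacle is to identify the correction stalk $(\phi_t\phi_f\Z^\bullet_\U)_\0$ under the IPA hypothesis. The condition $\dim_\0 |\Gamma_{f,t}|\cap V(f) \leq 0$ is precisely the microlocal isolation of $(\0, d_\0 t)$ in $\overline{T^*_f\U} \cap (f\circ\pi)^{-1}(0) \cap \im dt$, so that $\phi_t[-1]\phi_f[-1]\Z^\bullet_\U[n+1]$ is a perverse sheaf supported at $\{\0\}$; this forces the correction to be concentrated in a single degree, with rank computable as an intersection number via the microlocal framework of \cite{enrichpolar} (essentially the intersection number underlying $\tau$). The delicate point is then comparing the two connecting maps $\Z^\tau \to \widetilde H^n(F_{f,\0})$ appearing in the two long exact sequences: their difference is measured by the complex link computation in \thmref{thm:vangen}(2), and is responsible for the extra $\Z^\gamma$ summand in the decomposition.

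Finally, in degrees $k \neq n-1, n$ the correction vanishes and both sequences collapse to give $\widetilde H^k(F_{f_0,\0}) \cong \hyp^k(F_{t,\0}\cap\Sigma f;\phi_f\Z^\bullet_\U)$ (both sides vanish for $k \geq n$ by the dimension of $F_{f_0,\0}$ and perversity bounds on $\phi_f\Z^\bullet_\U$). In degrees $n-1$ and $n$, a diagram chase using the $\Z$-freeness of all finitely generated groups involved produces the direct sum $\hyp^{n-1}(F_{t,\0}\cap\Sigma f;\phi_f\Z^\bullet_\U) \oplus \Z^\gamma$. The rank inequality $\rank \widetilde H^{n-1}(F_{f_0,\0}) \geq \gamma$ is then immediate.
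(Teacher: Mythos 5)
Your two-long-exact-sequence skeleton is sound, and it is necessarily a different route from the paper's, whose proof is a one-line citation of Item 3 of Corollary 4.6 of \cite{enrichpolar}. The pair sequence for $(F_{f,\0},F_{f_0,\0})$ combined with $\widetilde H^n(F_{f_0,\0};\Z)=0$, the stalk sequence of the comparison triangle $i^*\to\psi_t\to\phi_t$ applied to $\phi_f\Z^\bullet_\U$, and the observation that the IPA hypothesis forces $\phi_t[-1]\phi_f[-1]\Z^\bullet_\U[n+1]$ to be perverse and supported at $\{\0\}$ (hence concentrated in one degree) are all correct.

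The gap is in the identification of that correction stalk, which is exactly where the content of the theorem lives. Its rank is not ``the intersection number underlying $\tau$''; it is $\tau-\gamma$. (Test case: $f=x^3-tx$ on $\C^2$, $n=1$: here $\tau=3$, $\gamma=2$, $\Sigma f=\{\0\}$, $\phi_f\Z^\bullet_\U$ is a rank-one skyscraper, and $(\phi_t\phi_f\Z^\bullet_\U)_\0$ is a single $\Z$ in degree $0=n-1$.) The correct count comes from the additivity $CC(\psi_f[-1]\Z^\bullet_\U[n+1])=CC(\Z^\bullet_{V(f)}[n])+CC(\phi_f[-1]\Z^\bullet_\U[n+1])$ together with the index formula expressing the rank of $H^0(\phi_t[-1]\Pdot)_\0$ as the intersection number of $CC(\Pdot)$ with $\im dt$ at $(\0,d_\0t)$: the three terms yield $\tau$, $\gamma$ (this is precisely Item 2 of \thmref{thm:vangen}) and hence $\tau-\gamma$. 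This is where $\gamma$ actually enters; your proposed mechanism --- a ``delicate comparison of the two connecting maps,'' measured by the complex link --- is neither carried out nor needed. Once the correction term is the free group $\Z^{\tau-\gamma}$, a pure rank count finishes: both sequences present the relevant groups as split extensions of $\widetilde H^{n-1}(F_{f,\0};\Z)$ by the kernels of maps from $\Z^\tau$, respectively $\Z^{\tau-\gamma}$, onto the same group $\widetilde H^n(F_{f,\0};\Z)$, and those kernels are free of ranks differing by exactly $\gamma$. Two further points still require justification: the surjectivity of $\Z^{\tau-\gamma}\to\widetilde H^n(F_{f,\0};\Z)$, equivalently $\hyp^n(F_{t,\0}\cap\Sigma f;\phi_f\Z^\bullet_\U)=0$ --- your ``perversity bounds'' only give vanishing for $k>n$, and degree $n$ needs Artin vanishing on the Stein space $F_{t,\0}\cap\Sigma f$ for the shifted perverse restriction of $\phi_f\Z^\bullet_\U[n]$ to the generic slice $V(t-a)$ --- and the freeness of the correction stalk, which the index-formula identification supplies but which you do not otherwise establish.
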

\begin{proof} This is precisely Item 3 of Corollary 4.6 of  \cite{enrichpolar}, with $\Fdot=\Z^\bullet_\U[n+1]$, noting that 
$$\psi_t[-1]\Z^\bullet_\U[n+1]\cong\Z^\bullet_{|_{V(t)}}[n],
$$
since $\phi_t[-1]\Z^\bullet_\U[n+1]=0$ as $t$ has no critical points.
\end{proof}

\medskip

As an immediate corollary, we have:

\begin{cor} Suppose that $H^{n-1}(F_{f_0, \0};\Z)=0$. Then $f$ is a null IPA-deformation.
\end{cor}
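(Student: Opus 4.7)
The plan is to read the conclusion directly off \thmref{thm:ipa}. Under the hypothesis $H^{n-1}(F_{f_0,\0};\Z)=0$ (implicitly assuming $n\geq 2$, so that the reduced and unreduced cohomology agree), the second isomorphism in the theorem collapses to
$$
0 \ = \ \widetilde H^{n-1}(F_{f_0,\0};\Z) \ \cong \ \Z^\gamma \oplus \hyp^{n-1}(F_{t,\0}\cap\Sigma f; \phi_f\Z_\U^\bullet).
$$
In particular the $\Z^\gamma$ summand vanishes, so $\gamma=(\Gamma^1_{f,t}\cdot V(t))_\0=0$. The remaining task is to promote this numerical vanishing to the geometric assertion $\0\notin\big|\Gamma_{f,t}\big|$.

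I would argue this by contrapositive: assume $\0\in\big|\Gamma_{f,t}\big|$ and deduce $\gamma\geq 1$. By \propref{prop:polarprops}(1) every irreducible component of $\big|\Gamma_{f,t}\big|$ has dimension at least $1$, while the IPA hypothesis gives $\dim_\0\big|\Gamma_{f,t}\big|\cap V(t)\leq 0$. Since $V(t)$ is a hyperplane, any component $C$ through $\0$ of dimension $d\geq 2$ would satisfy $\dim_\0 C\cap V(t)\geq d-1\geq 1$, contradicting IPA. Hence every component of $\big|\Gamma_{f,t}\big|$ passing through $\0$ is exactly $1$-dimensional and not contained in $V(t)$. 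Thus near $\0$, $\big|\Gamma_{f,t}\big|$ is purely $1$-dimensional, so $\Gamma^1_{f,t}$ is a well-defined cycle in a neighborhood of $\0$, and each component $C$ through $\0$ contributes $m_C\cdot(C\cdot V(t))_\0\geq 1$ to the intersection number $\gamma$. Hence $\gamma\geq 1$, contradicting $\gamma=0$.

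The main obstacle is really just bookkeeping: one must verify that the IPA condition forces the polar components through $\0$ to be $1$-dimensional (so that $\Gamma^1_{f,t}$ is actually defined locally), and that the multiplicity of a proper intersection between a $1$-dimensional cycle with positive coefficients and a hypersurface through a common point is at least $1$. Neither step is deep, but both must be stated explicitly in order to turn the vanishing of the intersection number into the emptiness of $\big|\Gamma_{f,t}\big|$ at $\0$.
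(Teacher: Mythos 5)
Your proposal is correct and follows exactly the route the paper intends: the paper states this as an ``immediate corollary'' of \thmref{thm:ipa} with no written proof, the implicit argument being precisely that $H^{n-1}(F_{f_0,\0};\Z)=0$ forces $\gamma=(\Gamma^1_{f,t}\cdot V(t))_\0=0$, which can only happen if $\0\notin\big|\Gamma_{f,t}\big|$. Your contrapositive bookkeeping (components through $\0$ are forced to be $1$-dimensional and not contained in $V(t)$ by the IPA condition, so each contributes at least $1$ to $\gamma$) correctly supplies the details the paper leaves unstated.
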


\medskip

We wish now to give two examples. 

\medskip

\begin{exm} Suppose $\dim_\0\Sigma(f_0)=0$. 

\smallskip

If $\dim_\0\Sigma f\leq 0$, then $F_{t,\0}\cap \Sigma f=\emptyset$ and $\hyp^k(F_{t,\0}\cap\Sigma f; \phi_f\Z_\U^\bullet)=0$ for all $k$.

\smallskip

  If $\dim_\0\Sigma f\geq 1$, then 
  
  \begin{itemize}
  \item $F_{t,\0}\cap \Sigma f$ consists of a finite number of points $\{p_1, \dots, p_m\}$, 
  
  \smallskip
  
  \item  $\hyp^k(F_{t,\0}\cap\Sigma f; \phi_f\Z_\U^\bullet)=0$ for  $k\neq n-1$, and
  
  \smallskip
  
  \item $\hyp^{n-1}(F_{t,\0}\cap\Sigma f; \phi_f\Z_\U^\bullet)=\bigoplus_i\Z^{\mu_{p_i}(f_{|_{V(t-t(p_i))}})}$.
  
  \end{itemize}
  
  \smallskip

Thus, we arrive at the well-known, easy result that
$$
\mu_\0(f_0)= (\Gamma^1_{f,t}\cdot V(t))_\0 \ +\sum_{p\in F_{t,\0}\cap\Sigma f} \mu_p(f_{|_{V(t-t(p))}}).
$$

\end{exm}

\medskip

\begin{exm}\label{exm:onedim} Suppose $\dim_\0\Sigma(f_0)=1$. Then, \propref{prop:dimbehave} tells us that $\dim_0\Sigma f = 2$ and, hence, $F_{t,\0}\cap \Sigma f$ is $1$-dimensional. Calculating $\hyp^k(F_{t,\0}\cap\Sigma f; \phi_f\Z_\U^\bullet)$ in this case is generally highly non-trivial. But, in this example, we will look at a special case.

\smallskip

First, we need to discuss the cohomology of the Milnor fiber at the origin of $g(x,y,s):=y^2-x^b-sx^a$, where $b>a\geq 2$. This does not use IPA-deformations.

By the Sebastiani-Thom result \cite{sebthom}, the Milnor fiber $F_{g,\0}$ is, up to homotopy, the suspension of the Milnor fiber of $h(x,s):= -x^b-sx^a=-x^a(x^{b-a}+s)$. After an analytic change of coordinates at the origin, letting $\hat s:=x^{b-a}+s$, we find that $h$ becomes $\hat h(x, \hat s)=-x^a\hat s$. This is a homogeneous polynomial, and so, by Lemma 9.4 of \cite{milnorsing}, $F_{\hat h, \0}\cong \hat h^{-1}(1)$. But this is simply the graph of the function $k:\C^*\rightarrow\C^2$ given by $k(x)=-1/x^a$, which is isomorphic to $\C^*$. Thus, we find that, regardless of the values of $a$ and $b$, $F_{g, \0}$ has the homotopy-type of $S^2$ and so, for $k\neq 2$, $\widetilde H^k(F_{g, \0};\Z)=0$ and $\widetilde H^2(F_{g, \0};\Z)\cong\Z$.

\medskip

Now consider $f_0(x,y,s):=y^2-x^b-s^mx^a$, where $b>a\geq 2$ and $m\geq 2$. Again, this is a suspension, but we can no longer perform an analytic change of coordinates on $-x^a(x^{b-a}+s^m)$ to immediately determine the homotopy-type or cohomology of $F_{f_0, \0}$. It is true that we could analyze this by considering the set where $-x^a(x^{b-a}+s^m)=1$ as an $m$-fold branched cover of $\C^*$, but, instead we will use IPA-deformations.

\medskip

We claim that $f(x,y,s,t):=y^2-x^b-s^mx^a+tx^a$ is an IPA-deformation of $f_0$ and that \thmref{thm:ipa} allows us to calculate $H^*(F_{f_0,\0};\Z)$.

\smallskip

We first want to produce an $a_f$ stratification of $V(f)$. It is trivial to verify that $\Sigma f=V(x,y)$. Now, consider the $2$-parameter family of isolated critical points given by 
$$f_{s,t}(x,y):=y^2-x^b-s^mx^a+tx^a.
$$
The partial derivatives are
$$
\frac{\partial f_{s,t}}{\partial x}=-bx^{b-1}-as^mx^{a-1}+atx^{a-1}=-x^{a-1}\big(bx^{b-a}+a(s^m-t)\big)\hskip 0.3in\textnormal{and}\hskip 0.3in \frac{\partial f_{s,t}}{\partial y}=2y,
$$
and we find that the Milnor numbers of $f_{s,t}$ at $(x,y)=(0,0)$ are given by
$$
\mu_\0(f_{s,t})=\begin{cases}a-1, \textnormal{ if } s^m-t\neq 0,\\
b-1,  \textnormal{ if } s^m-t= 0.
\end{cases}
$$

By Theorem 6.8 of \cite{lecycles}, this implies that 
$$
\strat:=\{V(f)-V(x,y), \,V(x,y)-V(s^m-t), \,V(x,y, s^m-t)\}
$$
is an $a_f$ stratification of $V(f)$. Furthermore, $V(t)$ clearly transversely intersects $V(f)-V(x,y)$ and $V(x,y)-V(s^m-t)$ and also, vacuously, transversely intersects $V(x,y,s^m-t)$ in $\C^4-\{\0\}$. Therefore, by Item (5) of \propref{prop:polarprops}, $f$ is an IPA-deformation of $f_0$.

Thus, \thmref{thm:ipa} tells us that:

\smallskip

\noindent for $k\neq 2$,
$$
\widetilde H^{k}(F_{f_0,\0}; \Z)\ \cong\  \hyp^k(F_{t,\0}\cap\Sigma f; \phi_f\Z_\U^\bullet),
$$
and 
$$
\widetilde H^2(F_{f_0,\0}; \Z) \ \cong \ \Z^\gamma\ \oplus\ \hyp^{2}(F_{t,\0}\cap\Sigma f; \phi_f\Z_\U^\bullet),
$$

\medskip

\noindent where $\gamma:=(\Gamma^1_{f,t}\cdot V(t))_\0$.

Items (2) and (3) of \propref{prop:polarprops} tell us how to calculate $\Gamma^1_{f,t}$. First, we have
$$
\Sigma f\cup |\Gamma_{f,t}| \ = \ V\left(\frac{\partial f}{\partial x}, \frac{\partial f}{\partial y}, \frac{\partial f}{\partial s}\right) = V\big(-x^{a-1}(bx^{b-a}+a(s^m-t)), \,2y, \,-ms^{m-1}x^a\big)=
$$
$$
V(x,y)\cup V\big(bx^{b-a}+a(s^m-t), \,y, \,s^{m-1}\big), 
$$
and we see that 
$$ |\Gamma_{f,t}|=V\big(bx^{b-a}+a(s^m-t), \,y, \,s^{m-1}\big).$$
However, above, we were careful to preserve the cycle structure on $|\Gamma_{f,t}|$ in our calculation. 
Thus, we find
$$
\Gamma^1_{f,t}=\left[V\big(bx^{b-a}+a(s^m-t), \,y, \,s^{m-1}\big)\right]=(m-1)V(bx^{b-a}-at, y, s)
$$
and
$$
(\Gamma^1_{f,t}\cdot V(t))_\0= \big((m-1)V(bx^{b-a}-at, y, s)\cdot V(t)\big)_\0= (m-1)(b-a).
$$

\bigskip

It remains for us to calculate $\hyp^k(F_{t,\0}\cap\Sigma f; \phi_f\Z_\U^\bullet)$.

\medskip

For $0<|a|\ll \epsilon\ll 1$, 
$$F_{t,\0}\cap\Sigma f=B^\circ_\epsilon(\0)\cap \Sigma f\cap V(t-a)
$$ is an open disk $D$ containing the origin in the copy of the $s$-plane where $x=0$, $y=0$, and $t=a$. By our earlier calculation of $\mu_\0(f_{s,t})$, we know that the restriction to $D$ of $\phi_f\Z_\U^\bullet$ is locally constant, with stalk cohomology which is non-zero only in degree $1$, on $D-\{p_1, \dots, p_m\}$, where the $p_i$'s are the $m$ distinct $m$-th roots of $a$; the stalk cohomology of this local system in degree $1$ is isomorphic to $\Z^{a-1}$. In addition, our earlier calculation of the cohomology of the Milnor fiber of $g(x,y,s):=y^2-x^b-sx^a$ at $\0$ tells us that, at each $p_i$, the stalk cohomology at $p_i$ of the restriction to $D$ of $\phi_f\Z_\U^\bullet$ is zero in all degrees other than degree $2$, where the cohomology is isomorphic to $\Z$.

Now, an easy induction on $m$, using the Mayer-Vietoris long exact sequence for hypercohomology, tells us that, for $k\neq 2$, $\hyp^k(F_{t,\0}\cap\Sigma f; \phi_f\Z_\U^\bullet)=0$  and $\hyp^2(F_{t,\0}\cap\Sigma f; \phi_f\Z_\U^\bullet)\cong \Z^{(m-1)a+1}$. Therefore, we conclude that, for $k\neq 2$, $\widetilde H^{k}(F_{f_0,\0}; \Z)=0$,
and 
$$
\widetilde H^2(F_{f_0,\0}; \Z) \ \cong \ \Z^{(m-1)(b-a)}\oplus \Z^{(m-1)a+1} \ \cong \ \Z^{(m-1)b+1}.
$$

\medskip

Note that, when $m=1$, we obtain our previous result for $F_{g, \0}$. Also note that, when $a=2$, one obtains from \cite{siersmaisoline}, and the calculation of the Euler characteristic via \cite{leattach}, that $F_{f_0, \0}$ has the homotopy-type of a bouquet of $(m-1)b+1$ two-spheres.

\end{exm}

\section{Questions, Comments, and Future Directions}

\begin{itemize}

\item The isomorphisms in \thmref{thm:ipa} may be thought of as refinements of the formulas, given in Proposition 1.21 of \cite{lecycles}, for the L\^e numbers of a function restricted to hyperplane slice.

\bigskip

\item \exref{exm:onedim} is very special. In general, it is unclear precisely when one can explicitly calculate $\hyp^k(F_{t,\0}\cap\Sigma f; \phi_f\Z_\U^\bullet)$ or even obtain better bounds than are currently known.

\bigskip

\item Perhaps the next ``easiest'' case where one can specialize the results of \cite{enrichpolar} is the case where  $X$ is a local complete intersection (LCI) and we want to deform $f_0:X\rightarrow\C$ via $f:\widetilde X\rightarrow\C$, where $\widetilde X$ is again an LCI. The point is that $\widetilde X$ being a purely $d$-dimensional LCI implies $\Z^\bullet_{\widetilde X}[d]$ is a perverse sheaf, which simplifies the results of \cite{enrichpolar}. 

However, the LCI case is still much more complicated than the affine case. First, because the vanishing cycles $\phi_t\Z^\bullet_{\widetilde X}[d]$ need not be zero and, second, because the data that we would need about $\widetilde X$ -- before considering $f$ -- is the characteristic cycle $CC(\widetilde X)$. This is highly non-trivial data, which is not easy to calculate given the defining functions for $\widetilde X$.
\end{itemize}

\medskip

\bigskip

\printbibliography
\end{document}